\theoremstyle{plain}
\newtheorem{thm}{Theorem}[section]
\newtheorem{lem}[thm]{Lemma}  
\newtheorem{prop}{Proposition}[section]
\theoremstyle{definition}
\newtheorem{defn}{Definition}[section]
\theoremstyle{remark}
\newcommand{\NaturalNumber}{\mathbb N}
\newcommand{\R}{\mathbb{R}}
\numberwithin{equation}{section}
\begin{document}
\title[Fixed Point Theory]
	{A simpler and more efficient fixed point iterative scheme}
	\author[Nida Izhar Mallick, Izhar Uddin]{ Nida Izhar Mallick$^{1}$, Izhar Uddin$^{2, *}$ }
	\maketitle
	\begin{center}
		{\footnotesize  $^{1,2}$Department of Mathematics, Jamia Millia Islamia,\\ New Delhi-110025, India.\\
			nidamallick2016@gmail.com, izharuddin1@jmi.ac.in.
		}
	\end{center}
	{\footnotesize \noindent{\bf Abstract.} Our work presents a new iterative scheme to approximate the fixed points of nonexpansive mapping. The proposed algorithm is constructed to enhance convergence efficiency while preserving theoretical robustness. Under appropriate assumptions on the underlying operator, we establish weak convergence and strong convergence results for the generated sequence. To demonstrate the effectiveness of the proposed scheme, we present a numerical example and perform a detailed comparative study with several well-known iterative methods from the literature. The numerical results clearly indicate that the proposed method exhibits a faster rate of convergence than the existing schemes, thereby confirming its computational advantage. These findings suggest that the new iterative process provides an efficient and reliable alternative for solving fixed point problems arising in applied mathematics and related fields.  \vskip0.5cm \noindent {\bf Keywords}:
		Nonexpansive mappings, Fixed point theory.
		
		\noindent {\bf AMS Subject Classification}: 47H09, 47H10.}
		\section{\textbf{Introduction}}
         Fixed point theory is an essential area of mathematical analysis that studies the existence and characteristics of points that remain unchanged under a specific mapping. Formally, if $W$ is a set and $K: W \to W$ is a mapping, a point $w \in W$ is called a fixed point if $K(w)=w$. Though it appears to be a simple concept, it holds broad and lasting significance in both pure and applied mathematics, serving as a powerful tool for establishing the uniqueness and existence of solutions to a wide range of mathematical problems. This significance is reflected in its diverse applications across numerous disciplines: in analysis and differential equations, fixed point results are frequently employed to establish the existence of solutions to nonlinear integral, functional, and differential equations \cite{AgElGeOr2008,KhKi2001,GoKi1990,SaAlAlUd2024, AlSaAkUd2024}; in optimization theory, they provide the theoretical foundation for the convergence of iterative algorithms \cite{SaBe2012,Xu2002}; in engineering and the applied sciences, fixed point techniques appear in control theory \cite{De1985}, signal processing \cite{Ce2012}, and image restoration processes \cite{By2002,CeElKoBo2005}.\\
         The origin of fixed point theory traces back to classical results such as Brouwer’s and Banach’s fixed point theorems, which laid the foundation for modern nonlinear analysis and inspired an extensive range of generalizations and applications. Its importance lies not only in its elegant theoretical structure but also in its ability to unify diverse problems under a single conceptual framework.\\
         Beyond proving the existence of fixed points, a major research direction concerns their approximation. Iterative methods provide practical means to compute fixed points even in the absence of exact analytical solutions. Over time, numerous fixed point theorems with more efficient iterative methods have been introduced and studied, see for example \cite{Abb, Aga, Ish, Kha, Man, Noo, Tha, M*, M, K, K*}. This progression from foundational theorems to advanced iterative algorithms reflects the field’s continuous evolution, where classical principles serve as the basis for modern computational techniques adapted to increasingly complex and high-dimensional problems.\\
         Among the earliest and most influential iterative methods are the Mann\cite{Man}, Ishikawa\cite{Ish}, and Noor\cite{Noo} schemes, which have played a fundamental role in shaping the theory and practice of fixed point approximation. These classical algorithms have inspired a range of modifications and extensions aimed at improving convergence speed, stability, and applicability to broader classes of mappings. One may refer to the following references \cite{Art, Khu}, as these basic iterative schemes find many applications across different branches of mathematics and applied sciences. Motivated by the strengths of these classical iterative schemes, we now propose a new iterative algorithm, defined as follows:\\
         Let $Z$ be a Banach space and $W$  is its  nonempty closed and convex subset and $K: W \to W$ be a self-nonexpansive mapping. Then,
		\begin{equation}\label{eq.2}
			\begin{cases}
				s_1 \in W \\
				r_n=  (1-\beta_n)s_n+\beta_n Ks_n,\\
				s_{n+1}= (1-\alpha_n)r_n+\alpha_nKs_n.
			\end{cases}
		\end{equation}
		with $\alpha_n, \beta_n \in (a, b) \subset (0, 1)$ for all $n \in \mathbf{N}$, where $a, b \in (0, 1)$.\\
        Unlike the Mann iteration, which employs a single weighted average at each step, the proposed algorithm (\ref{eq.2}) incorporates an intermediate step similar in spirit to Ishikawa’s method but with a distinct weighting structure, allowing for more flexible parameter selection. Compared to Noor’s three-step approach, our method achieves a balance between computational simplicity and enhanced convergence behavior, making it particularly suitable for certain classes of self-nonexpansive operators where both speed and stability are critical.To further clarify the structural differences, Figure [\ref{fig:1}, \ref{fig:2}] presents a schematic comparison between the Ishikawa iteration and the proposed algorithm. The diagram emphasizes how the intermediate step is positioned, resulting in a modified update sequence that preserves simplicity while potentially improving convergence performance.

        \begin{figure}[H]
        	\centering
        	\hspace*{0.2cm}
        	\includegraphics[width=1.02\textwidth]{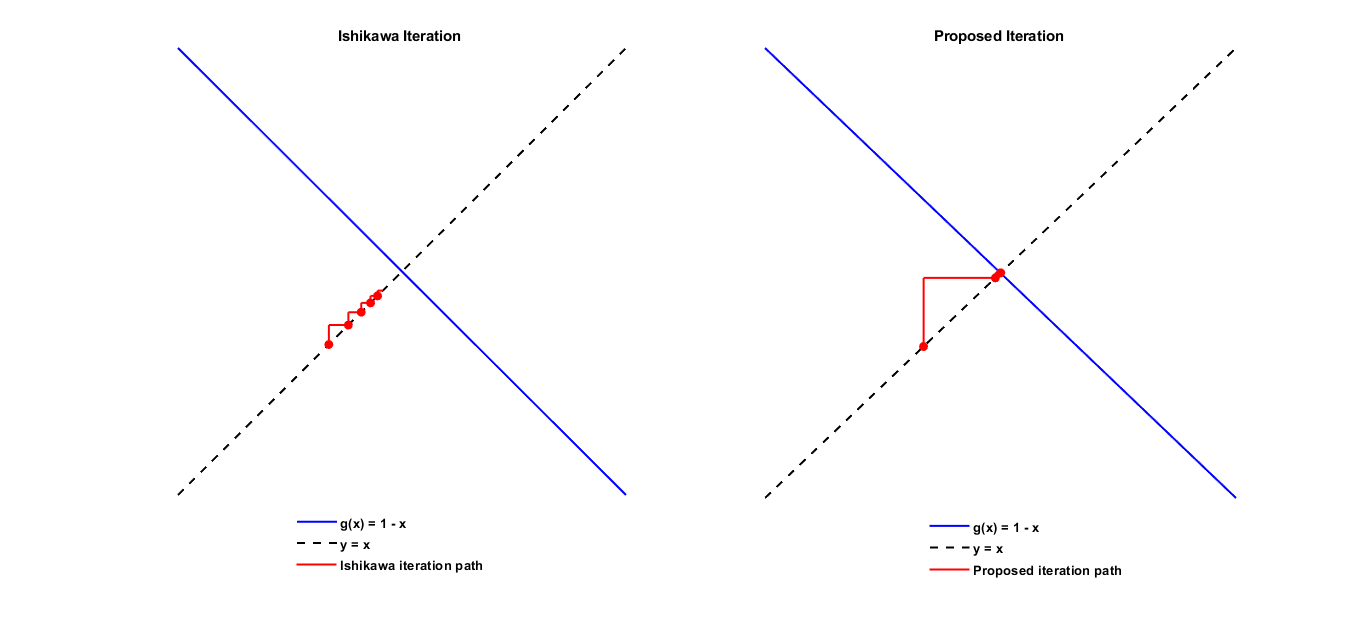} % Adjust width as needed
        	%\captionsetup{justification=raggedcenter, singlelinecheck=false, margin=0.05cm}
        	\vspace{-30pt}
            \centering
        	\caption{Convergence path of Ishikawa iteration and the proposed iteration for the fixed point of $g(x)=1-x$.}
        	\label{fig:1}
        \end{figure}

        \begin{figure}[H]
        	\centering
        	\hspace*{0.2cm}
        	\includegraphics[width=1.05\textwidth]{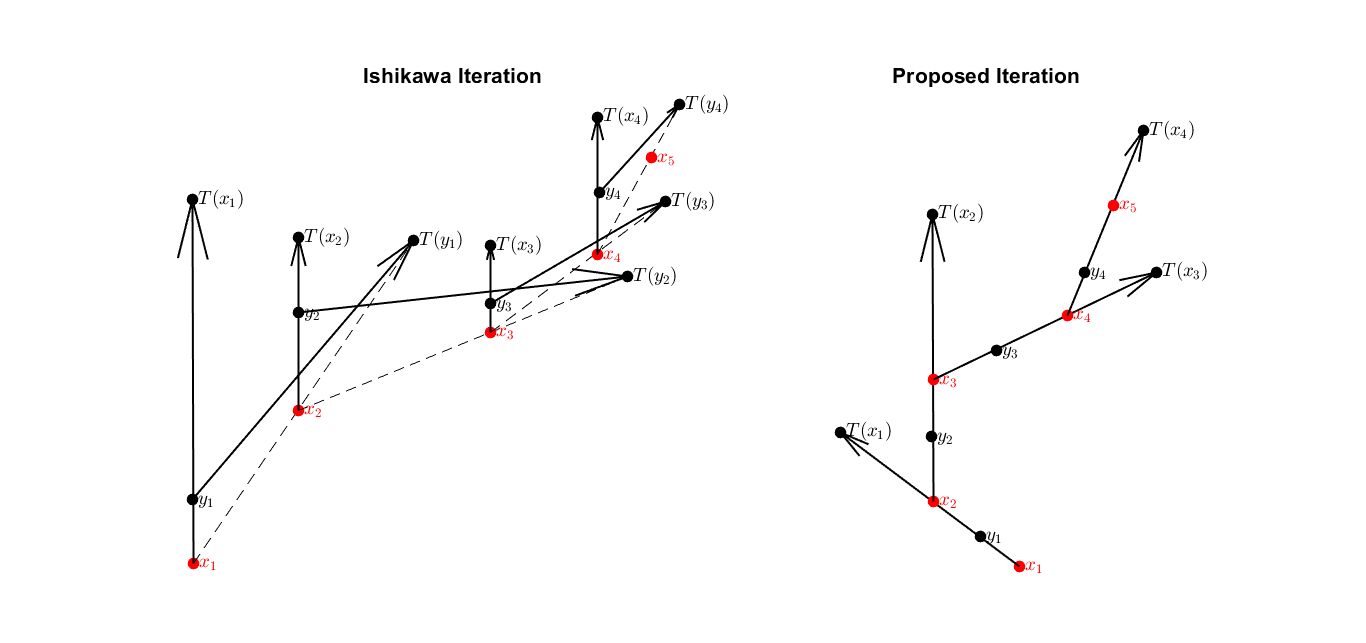} % Adjust width as needed
        	%\captionsetup{justification=raggedcenter, singlelinecheck=false, margin=0.05cm}
        	\vspace{-10pt}
            \centering
        	\caption{Difference between Ishikawa iteration and our proposed iteration.}
        	\label{fig:2}
       \end{figure}
        %\vspace{5pt}
        %\begin{figure}[H]
        	%\centering
        	%\hspace*{1.1cm}
        	%\includegraphics[width=1.15\textwidth]{SPaper/ProposediterationRedDot.png} % Adjust width as needed
        	%\captionsetup{justification=raggedcenter, singlelinecheck=false, margin=0.05cm}
        	%\vspace{-40pt}
           % \centering
        	%\caption{Comparison between Ishikawa iteration and our proposed iteration}
        	%\label{fig:3}
        %\end{figure}
        %\begin{figure}[H]
    %\centering
    % Left image
    %\begin{minipage}[t]{0.495\textwidth}
        %\centering
        %\includegraphics[width=\linewidth]{SPaper/IshikawaIterationRedDot.png}
        %\label{fig:ishikawa}
    %\end{minipage}
    %\hfill
    % Right image
    %\begin{minipage}[t]{0.495\textwidth}
       % \centering
        %\includegraphics[width=\linewidth]{SPaper/ProposediterationRedDot.png}
        %\label{fig:proposed}
    %\end{minipage}
    %\caption{Comparison between Ishikawa iteration and our proposed iteration.}
    %\label{fig:2}
%\end{figure}

         Figure [\ref{fig:1}] illustrates that the Ishikawa process requires several steps to approach the fixed point, whereas the proposed scheme reaches it in fewer iterations, demonstrating its faster convergence behavior. Similarly, figure [\ref{fig:2}] highlights that the proposed scheme reaches successive approximations more directly and efficiently, requiring fewer intermediate steps than the Ishikawa process.  
		\section{\textbf{Preliminaries}}
		\label{SC:2.Preliminaries}
		In this section, we summarize the basic definitions, lemmas, and propositions that will be applied to demonstrate the convergence theorems.
		\begin{defn}\label{def.3}\cite{Clar}
			A Banach space $Z$ is uniformly convex if for each $\epsilon \in (0,2]$ there exists a $\delta>0$ such that for $s,t \in Z$ with $\|s\|\leq 1$, $\|t\|\leq 1$ and $\|s-t\|\geq \epsilon$, it holds that $$\|\frac{s+t}{2}\|<1-\delta.$$
		\end{defn}
		\begin{defn}\label{def.4}\cite{Op}
			A Banach space $Z$ is fulfills Opial's condition whenever any sequence $\{s_n\}$ in $Z$ converges weakly to $s \in Z$ i.e., $s_n \rightharpoonup s$ it follows that $$\limsup_{n\to\infty}\|s_n-s\|<\limsup_{n\to\infty}\|s_n-t\|$$
			for every $t \in Z$ with $s \neq t$.\\
            A mapping $K: W \to Z$ is said to be demiclosed at $y \in Z$ if for each sequence $\{x_n\}$ in $W$ and each $x \in Z, x_n \rightharpoonup x$ and $Kx_n \to y $ imply that $x \in W$ and $Kx=y$.\\
			Let $W \subset Z$ be a nonempty, closed and convex, and suppose that $\{s_n\}$ is a bounded sequence in $Z$.
			\\For each $s \in Z$ write:
			$$r(s,\{s_n\})=\limsup_{n\to\infty}\|s-s_n\|.$$
			The value of asymptotic radius of the sequence $\{s_n\}$ with respect to $W$ is determined by
			$$r(W,\{s_n\})=inf\{r(s,\{s_n\}): s \in W\},$$
			and the value of asymptotic center $A(W,\{s_n\})$ of the sequence $\{s_n\}$ is determined by:
			$$A(W,\{s_n\})=\{s \in W: r(s,\{s_n\})=r(W,\{s_n\}) \}.$$
			A widely known result states that $A(W,\{s_n\})$ contains atleast one element if $W$ is weakly compact. Moreover, if there is a closed and convex subset $W$ of a uniformly convex Banach space $Z$ such that the sequence $\{s_n\}$ is bounded in $W$, then the asymptotic center uniquely determined \cite{Ed1,Ed2}.
		\end{defn}
        \begin{lem}\label{def.5a}\cite{G}
            Let $Z$ be a uniformly convex Banach space, $W$ a closed and convex subset of $Z$, and $K: W \to W$ nonexpansive mapping. Then $I-T$ is demiclosed on $W$.
        \end{lem}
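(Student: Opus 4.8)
The plan is to prove the statement in the precise form needed for the convergence theorems, namely the \emph{demiclosedness principle} for $I-K$ at $0$: if $\{x_n\}\subset W$ satisfies $x_n\rightharpoonup x$ and $\|x_n-Kx_n\|\to 0$, then $x\in W$ and $Kx=x$ (the general assertion of demiclosedness at an arbitrary point reduces to this case by translation). First I would note that a weakly convergent sequence is bounded, and that since $W$ is closed and convex it is weakly closed, so the weak limit $x$ already lies in $W$. By uniform convexity (Definition~\ref{def.3}) together with the remarks following Definition~\ref{def.4}, the bounded sequence $\{x_n\}$ admits a \emph{unique} asymptotic center $c=A(W,\{x_n\})$, characterized as the unique minimizer over $W$ of $\phi(y):=r(y,\{x_n\})=\limsup_{n\to\infty}\|x_n-y\|$.

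The first key step is to show that this center $c$ is a fixed point of $K$. Using nonexpansiveness and the hypothesis $\|x_n-Kx_n\|\to 0$, for every $n$ one has
\[
\|x_n-Kc\|\le\|x_n-Kx_n\|+\|Kx_n-Kc\|\le\|x_n-Kx_n\|+\|x_n-c\|.
\]
Passing to the $\limsup$ and discarding the vanishing term gives $\phi(Kc)\le\phi(c)$. Since $K$ is a self-map, $Kc\in W$, and because $c$ minimizes $\phi$ over $W$ we also have $\phi(Kc)\ge\phi(c)$. Hence $\phi(Kc)=\phi(c)=r(W,\{x_n\})$, so $Kc$ is itself an asymptotic center; uniqueness of the center then forces $Kc=c$.

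The remaining, and substantive, step is to identify the weak limit $x$ with the center $c$. Here I would first record that $Kx_n=x_n-(x_n-Kx_n)\rightharpoonup x$, so both $\{x_n\}$ and $\{Kx_n\}$ share the weak limit $x$ and the center $c$. If the space satisfies Opial's condition (Definition~\ref{def.4}), then the weak limit of a weakly convergent sequence is precisely its unique asymptotic center, giving $x=c$ at once and hence $Kx=Kc=c=x$. In a general uniformly convex space the identification $x=c$ is exactly the delicate point: one argues by contradiction assuming $Kx\neq x$, applies the modulus-of-convexity midpoint estimate
\[
\Bigl\|\tfrac{u+v}{2}\Bigr\|\le d\Bigl(1-\delta\bigl(\tfrac{\|u-v\|}{d}\bigr)\Bigr),\qquad \|u\|,\|v\|\le d,
\]
to the vectors $u=x_n-x$ and $v=Kx_n-Kx$ (whose difference converges to $x-Kx\neq 0$), and combines the resulting strict decrease of $\limsup_n\|x_n-\tfrac{x+Kx}{2}\|$ with weak lower semicontinuity of the norm to reach a contradiction.

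I expect this last paragraph to contain the main obstacle: the naive single midpoint estimate, balanced against weak lower semicontinuity, tends to yield only $\tfrac12\|x-Kx\|<\limsup_n\|x_n-x\|$, which is not yet contradictory, so the argument must be sharpened through the center $c$ (where the minimality $\phi(c)\le\phi(x)$ can be exploited) or through the finer quantitative uniform-convexity inequality that underlies Browder's demiclosedness principle. This is precisely the technical content supplied by the cited reference \cite{G}, and in the present paper the lemma is invoked at that level of generality.
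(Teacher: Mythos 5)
The paper does not prove this lemma at all: it is quoted verbatim from Goebel--Kirk \cite{G}, so your proposal can only be measured against the standard argument --- and by that measure it has a genuine gap, exactly where you yourself flag it. The lemma is asserted for an arbitrary uniformly convex Banach space, and such spaces need not satisfy Opial's condition ($L^p[0,1]$ with $1<p<\infty$, $p\neq 2$, is uniformly convex but fails Opial), so your Opial branch is unavailable under the stated hypotheses; and your general-case fallback is, as you concede, inconclusive: the midpoint estimate balanced against weak lower semicontinuity yields only $\tfrac12\|x-Kx\|\le d\bigl(1-\delta\bigl(\tfrac{\|x-Kx\|}{d}\bigr)\bigr)$ with $d=\limsup_n\|x_n-x\|$, which contains no contradiction. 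Moreover, ``sharpening through the center'' cannot close the gap: in a non-Opial uniformly convex space the weak limit of a bounded sequence genuinely can differ from its asymptotic center, so there is no route from the minimality of $\phi$ at $c$ to $x=c$. What you have actually proved is (i) that the asymptotic center of an approximate fixed point sequence is a fixed point of $K$ (correct, and standard), and (ii) demiclosedness at $0$ under the additional hypothesis of Opial's condition. Since this paper invokes the lemma only inside Theorem \ref{thm.1}, where Opial's condition is assumed, (ii) would in fact suffice for the paper's purposes --- but it is not a proof of the lemma as stated.

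The missing idea, and the route taken in \cite{G}, avoids asymptotic centers entirely: uniform convexity makes the approximate fixed point sets $F_\epsilon(K)=\{u\in W:\|u-Ku\|\le\epsilon\}$ approximately convex. Indeed, if $u,v\in F_\epsilon(K)$ and $m=\tfrac{u+v}{2}$, then nonexpansiveness gives $\|Km-u\|\le\tfrac12\|u-v\|+\epsilon$ and $\|Km-v\|\le\tfrac12\|u-v\|+\epsilon$, so $Km$ is nearly equidistant from $u$ and $v$ at nearly half their distance, and the modulus of convexity forces $\|m-Km\|\le\gamma(\epsilon)$ with $\gamma(\epsilon)\to 0$ as $\epsilon\to 0$, uniformly on bounded sets; iterating over dyadic convex combinations and passing to closures yields $\overline{\mathrm{co}}\,F_\epsilon(K)\subset F_{\gamma'(\epsilon)}(K)$ for a suitable $\gamma'$ with $\gamma'(\epsilon)\to 0$. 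Mazur's theorem places the weak limit in every tail hull, $x\in\bigcap_{m}\overline{\mathrm{co}}\{x_n:n\ge m\}$, while the tail $\{x_n:n\ge m\}$ lies in $F_{\epsilon_m}(K)$ with $\epsilon_m=\sup_{n\ge m}\|x_n-Kx_n\|\to 0$; hence $\|x-Kx\|\le\gamma'(\epsilon_m)\to 0$, i.e.\ $Kx=x$. Two smaller points: your opening reduction of demiclosedness at arbitrary $y$ to $y=0$ ``by translation'' needs care, since $u\mapsto Ku+y$ need not map $W$ into $W$ (your center argument used $Kc\in W$); and for unbounded $W$ the existence of the asymptotic center should be justified by reflexivity together with convexity and coercivity of $\phi$, not by weak compactness of $W$.
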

		\begin{prop}\label{def.5}\cite{Sc}
			Let $Z$ to be a uniformly convex Banach space and $\{q_n\}$ be any sequence such that $0<p \leq q_n \leq r<1$ for all $n \geq 1$ and for some $p,r \in \R$. Let there be any two seuences $\{s_n\}$ and $\{t_n\}$ of $Z$ such that $\limsup_{n\to\infty}\|s_n\| \leq g$, $\limsup_{n\to\infty}\|t_n\| \leq g$ and $\lim_{n\to\infty}\|q_ns_n+(1-q_n)t_n\|=g$ for some $g \geq 0$. Then, $\lim_{n\to\infty}\|s_n-t_n\|=0$.
		\end{prop}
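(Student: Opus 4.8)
The plan is to dispose of the degenerate case $g=0$ at once and then treat the main case $g>0$ by contradiction, extracting from uniform convexity a quantitative contraction for convex combinations. If $g=0$, the hypotheses $\limsup_n\|s_n\|\le 0$ and $\limsup_n\|t_n\|\le 0$ force $\|s_n\|\to 0$ and $\|t_n\|\to 0$, whence $\|s_n-t_n\|\le\|s_n\|+\|t_n\|\to 0$ by the triangle inequality and nothing more is needed. So assume $g>0$ and suppose, for contradiction, that $\|s_n-t_n\|\not\to 0$. Then there exist $\epsilon>0$ and a subsequence $\{n_k\}$ with $\|s_{n_k}-t_{n_k}\|\ge\epsilon$ for all $k$; passing to this subsequence preserves all the hypotheses (a subsequence of a sequence with $\limsup\le g$ still has $\limsup\le g$, and $\|q_{n_k}s_{n_k}+(1-q_{n_k})t_{n_k}\|\to g$), so it suffices to derive a contradiction from it.

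The crucial auxiliary step is a convex-combination form of uniform convexity. I claim that for the fixed threshold $\epsilon_0:=\epsilon/(g+1)>0$ there is a single $\delta_0>0$, supplied by Definition \ref{def.3}, such that
\[
\|\lambda u+(1-\lambda)v\|\le 1-2\min\{\lambda,1-\lambda\}\,\delta_0
\]
whenever $\|u\|\le 1$, $\|v\|\le 1$, $\|u-v\|\ge\epsilon_0$ and $\lambda\in[0,1]$. To prove this, assume first $\lambda\le\tfrac12$ and use the decomposition $\lambda u+(1-\lambda)v=2\lambda\cdot\frac{u+v}{2}+(1-2\lambda)v$; the triangle inequality together with $\|\frac{u+v}{2}\|\le 1-\delta_0$ from Definition \ref{def.3} and $\|v\|\le 1$ yields $\|\lambda u+(1-\lambda)v\|\le 2\lambda(1-\delta_0)+(1-2\lambda)=1-2\lambda\delta_0$. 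The symmetric decomposition $\lambda u+(1-\lambda)v=2(1-\lambda)\frac{u+v}{2}+(2\lambda-1)u$ handles $\lambda>\tfrac12$, giving the bound with $1-\lambda$ in place of $\lambda$, and combining the two cases establishes the claim.

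It then remains to feed the normalized iterates into this inequality. Fix any $\eta\in(0,1]$; since $\limsup_k\|s_{n_k}\|\le g$ and $\limsup_k\|t_{n_k}\|\le g$, for all large $k$ both $\|s_{n_k}\|$ and $\|t_{n_k}\|$ are at most $R:=g+\eta$. Setting $u_k:=s_{n_k}/R$ and $v_k:=t_{n_k}/R$ gives $\|u_k\|,\|v_k\|\le 1$ and $\|u_k-v_k\|=\|s_{n_k}-t_{n_k}\|/R\ge\epsilon/(g+1)=\epsilon_0$, because $R\le g+1$. Since $\min\{q_{n_k},1-q_{n_k}\}\ge m:=\min\{p,1-r\}>0$, applying the claim with $\lambda=q_{n_k}$ and multiplying through by $R$ gives $\|q_{n_k}s_{n_k}+(1-q_{n_k})t_{n_k}\|\le(g+\eta)(1-2m\delta_0)$. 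Letting $k\to\infty$, so the left side tends to $g$, and then letting $\eta\to 0^+$ yields $g\le g(1-2m\delta_0)$, that is $2gm\delta_0\le 0$, contradicting $g,m,\delta_0>0$. Hence $\|s_n-t_n\|\to 0$. The main obstacle is the convex-combination inequality: the definition of uniform convexity controls only the midpoint $\frac{u+v}{2}$, and the midpoint decomposition above is precisely what transfers this control to the unbalanced combination $q_n s_n+(1-q_n)t_n$, with the uniform lower bound $m$ on $\min\{q_n,1-q_n\}$ guaranteeing the contraction does not degenerate.
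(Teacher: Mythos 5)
Your proof is correct, but note that the paper never proves this proposition: it is quoted as a preliminary directly from Schu \cite{Sc}, so there is no in-paper argument to match, and your self-contained derivation from Definition \ref{def.3} is a genuine addition. Its engine --- the decomposition $\lambda u+(1-\lambda)v=2\lambda\cdot\frac{u+v}{2}+(1-2\lambda)v$ for $\lambda\le\tfrac12$ and its mirror for $\lambda>\tfrac12$, which upgrades the midpoint estimate of uniform convexity to the uniform bound $\|\lambda u+(1-\lambda)v\|\le 1-2\min\{\lambda,1-\lambda\}\delta_0$ --- is essentially the classical Groetsch--Schu route, and you handle the delicate points correctly: passing to a subsequence preserves all hypotheses, the normalization radius $R=g+\eta$ with $\eta\to 0^+$ removes the slack in the $\limsup$ bounds, and the hypothesis $0<p\le q_n\le r<1$ enters exactly through $\min\{q_n,1-q_n\}\ge\min\{p,1-r\}>0$. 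Two small remarks. First, Definition \ref{def.3} supplies $\delta_0$ only for $\epsilon_0\in(0,2]$, while your $\epsilon_0=\epsilon/(g+1)$ could formally exceed $2$; this is harmless (vectors of norm at most $1$ satisfy $\|u-v\|\le 2$, so take $\epsilon_0=\min\{\epsilon/(g+1),2\}$, or note that $\|s_{n_k}-t_{n_k}\|\le 2(g+\eta)$ eventually forces $\epsilon\le 2(g+1)$), but it should be said. Second, the paper's own toolbox offers a shorter path: Theorem \ref{thm.a} with $p=2$, writing $\varphi$ for its convex function to avoid clashing with the constant $g$ here, gives $\|q_ns_n+(1-q_n)t_n\|^2\le q_n\|s_n\|^2+(1-q_n)\|t_n\|^2-q_n(1-q_n)\varphi(\|s_n-t_n\|)$ on a ball eventually containing both sequences, whence $p(1-r)\limsup_{n\to\infty}\varphi(\|s_n-t_n\|)\le g^2-g^2=0$, and continuity and strict monotonicity of $\varphi$ with $\varphi(0)=0$ yield $\|s_n-t_n\|\to 0$ with no case split and no argument by contradiction; your approach buys independence from Xu's inequality at the cost of length.
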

		\begin{defn}\label{def.6}
			\cite{Se} A self mapping $K$ on a nonempty subset $W$ of a normed space adheres to Condition $(I)$ if there exists a function $m:[0, \infty) \to [0, \infty)$ satisfying $m(0)=0$ and $m(i)>0$ for all $i \in (0, \infty)$ which is nondecreasing such that $\|s-Ks\| \geq m(d(s, f(K)))$ for all $s\in W$. Here, $F(K)$ is the set of all fixed points of $K$ and $d(s, f(K)=inf\{\|s-t\|: t \in F(K)\}$.
		\end{defn}
		\begin{defn}\label{def.7}
			For a mapping $K: W \to W$ if a sequence $\{s_n\}$ in $W$ satisfies $\lim_{n\to\infty}\|Ks_n-s_n\|=0$, then this sequence $\{s_n\}$ is known as approximate fixed point sequence(a.f.p.s. for short).
		\end{defn}
        \begin{thm}\label{thm.a}
        \cite{Xu} Let $p>1$, $r>0$ be two fixed real numbers. Then a Banach space $Z$ is said to be uniformly convex if and only if there is a continuous, strictly increasing and convex function $g: [0. \infty] \rightarrow [0. \infty], g(0)=0,$ such that 
        $$ \|\lambda s + (1-\lambda)t\|^p \leq \lambda\|s\|^p + (1-\lambda)\|t\|^p - W_p(\lambda)g(\|s-t\|)$$
        for all $s, t$ in $B_r$ and $0 \leq \lambda \leq 1$. Here $W_p=\lambda(1-\lambda)^p + \lambda^p(1-\lambda)$ and $B_r$ is a closed ball with center zero and radius $r$.
        \end{thm}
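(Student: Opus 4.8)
The statement is an equivalence, so the plan is to prove the two implications separately, with the \emph{if} direction being routine and the \emph{only if} direction carrying all the difficulty. For sufficiency (existence of $g$ implies uniform convexity) I would simply specialize the inequality to $\lambda=\tfrac12$. A direct computation gives $W_p(\tfrac12)=2^{-p}$, so the hypothesis reads $\|\tfrac{s+t}{2}\|^p\le\tfrac12\|s\|^p+\tfrac12\|t\|^p-2^{-p}g(\|s-t\|)$. Given $\epsilon\in(0,2]$ and $s,t$ with $\|s\|,\|t\|\le 1$ and $\|s-t\|\ge\epsilon$, I would apply the inequality to the rescaled points $rs,rt\in B_r$; since $g$ is strictly increasing with $g(0)=0$ (so $g(r\epsilon)>0$) and nondecreasing, this yields $\|\tfrac{s+t}{2}\|^p\le 1-(2r)^{-p}g(r\epsilon)<1$. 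Setting $\delta=1-\bigl(1-(2r)^{-p}g(r\epsilon)\bigr)^{1/p}>0$ then produces exactly the modulus required by Definition \ref{def.3}.

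For necessity the plan is to build $g$ out of the defect functional itself. I would put $R(s,t,\lambda)=\lambda\|s\|^p+(1-\lambda)\|t\|^p-\|\lambda s+(1-\lambda)t\|^p$ and define the candidate
\[ g_0(\epsilon)=\inf\Big\{\frac{R(s,t,\lambda)}{W_p(\lambda)}:\ s,t\in B_r,\ \|s-t\|\ge\epsilon,\ \lambda\in(0,1)\Big\}. \]
Convexity of $u\mapsto u^p$ together with convexity of the norm gives $R\ge 0$, hence $g_0\ge 0$, and $g_0$ is nondecreasing because the admissible set shrinks as $\epsilon$ grows. By construction, for any $s,t\in B_r$ and $\lambda\in(0,1)$ the choice $\epsilon=\|s-t\|$ is admissible, so $R\ge W_p(\lambda)\,g_0(\|s-t\|)$, i.e.\ the claimed inequality already holds verbatim with $g_0$ in place of $g$ (the endpoints $\lambda\in\{0,1\}$ being trivial equalities). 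Two tasks remain: first, to show $g_0(\epsilon)>0$ for every $\epsilon>0$, which is where uniform convexity enters; second, to pass from $g_0$ to a genuine $g$ that is continuous, strictly increasing and convex with $g(0)=0$. For the second task I would replace $g_0$ by any continuous, strictly increasing, convex minorant $g\le g_0$ with $g(0)=0$ and $g>0$ on $(0,2r]$ (such a minorant always exists below a positive nondecreasing function, for instance via a suitably corrected largest convex minorant); since $g\le g_0$, the inequality is preserved.

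The hard part will be the positivity $g_0(\epsilon)>0$, and more precisely its \emph{uniformity in} $\lambda$. For $\lambda$ bounded away from $0$ and $1$ this is the standard quantitative form of uniform convexity: with $\|s-t\|\ge\epsilon$ and $s,t\in B_r$, the point $\lambda s+(1-\lambda)t$ is pushed strictly inside, forcing $R$ below by a positive amount controlled by the modulus $\delta(\epsilon)$. The genuine obstacle is the regime $\lambda\to 0^+$ (and symmetrically $\lambda\to 1^-$), where $W_p(\lambda)\sim\lambda$ and $R\to 0$ simultaneously, so the ratio is a $0/0$ limit that a mere weak-compactness argument cannot settle, since weak limits need not preserve $\|s-t\|\ge\epsilon$. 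The resolution is that $W_p(\lambda)$ is engineered to vanish at exactly the rate of $R$ — the one–dimensional model $R=\lambda(1-\lambda)\|s-t\|^2$, $W_2(\lambda)=\lambda(1-\lambda)$ already exhibits this for $p=2$ — so I would control the endpoint regime by a direct quantitative estimate bounding $R/W_p(\lambda)$ below in terms of the modulus of convexity uniformly in $\lambda$, following Xu's argument \cite{Xu}. Securing that uniform-in-$\lambda$ lower bound is the crux of the entire proof.
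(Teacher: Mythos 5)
A preliminary remark: the paper contains no proof of Theorem \ref{thm.a} at all --- it is imported verbatim from Xu \cite{Xu} as a preliminary tool for Lemma \ref{lem.7} --- so your attempt can only be measured against Xu's original argument, not against anything in this manuscript. Your sufficiency direction is complete and correct: $W_p(\tfrac12)=2^{-p}$, the rescaling of unit vectors into $B_r$, and $\delta=1-\bigl(1-(2r)^{-p}g(r\epsilon)\bigr)^{1/p}$ all check out (the strict inequality demanded by Definition \ref{def.3} costs at most a halving of $\delta$). In the necessity direction your skeleton is also the right one, and the regularization step you wave at does in fact go through: setting $g_0(0)=0$, the convex envelope $\hat g_0$ of the nondecreasing function $g_0$ on $[0,2r]$ satisfies $\hat g_0(x)\geq\min\bigl\{g_0(x/2),\tfrac{x}{4r}\,g_0(x)\bigr\}>0$ for $x\in(0,2r]$ (in dimension one the envelope is an infimum over two-point chords), and a nonnegative convex function vanishing only at $0$ is automatically continuous and strictly increasing there; since only $\|s-t\|\leq 2r$ can occur for $s,t\in B_r$, an affine extension beyond $2r$ is harmless.

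The genuine gap is exactly the one you name and then decline to fill: the positivity $g_0(\epsilon)>0$, uniformly in $\lambda\in(0,1)$, in the endpoint regime $\lambda\to 0^+$ where $R(s,t,\lambda)$ and $W_p(\lambda)\sim\lambda$ vanish together. Writing that you would secure the uniform lower bound ``following Xu's argument \cite{Xu}'' is circular in a blind proof of Xu's theorem; as submitted, the necessity half is an announcement of a proof, not a proof. Your diagnosis of why soft arguments fail is accurate, and the missing ingredient is identifiable: one needs a gain \emph{linear} in $\lambda$, which already follows from the midpoint form of uniform convexity, since for $\|s\|,\|t\|\leq 1$, $\|s-t\|\geq\epsilon$ and $\lambda\leq\tfrac12$ one has $\|\lambda s+(1-\lambda)t\|=\bigl\|2\lambda\tfrac{s+t}{2}+(1-2\lambda)t\bigr\|\leq 2\lambda(1-\delta(\epsilon))+(1-2\lambda)=1-2\lambda\delta(\epsilon)\leq 1-2\lambda(1-\lambda)\delta(\epsilon)$. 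But even granting this, you must scale to the ball of radius $\max\{\|s\|,\|t\|\}$ and then carry the linear gain through the $p$-th power against $\lambda\|s\|^p+(1-\lambda)\|t\|^p$ in the case $\|s\|\neq\|t\|$ --- a scalar inequality of exactly the $W_p(\lambda)$ shape, and that computation is the actual substance of Xu's proof. Until that estimate (or an equivalent) is written out, your proposal establishes only the easy implication.
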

		
		\section{\textbf{Main Result}}
		\label{SC:2.Main Result}
		  In this section, we establish the strong and weak convergence theorems for the sequence produced by our proposed iteration (1.1) for nonexpansive mappings in Banach spaces.\\
 	\begin{lem}\label{lem.6}\ Let $W \neq\emptyset$ be a closed convex subset of a Banach space $Z$ and $K:W \rightarrow W$ be a nonexpansive mapping with $F(K)\neq\emptyset$. If sequence $\{s_n\}$ be defined as in $(\ref{eq.2})$ then $ \lim_{n\to\infty}||s_n-w||$ exists for any $ w \in F(K)$.
		\end{lem}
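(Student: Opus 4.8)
The plan is to show that the real sequence $\{\|s_n - w\|\}_{n\ge 1}$ is monotonically non-increasing and bounded below by $0$; the existence of the limit then follows at once from the monotone convergence theorem for real sequences. This is the standard Fej\'er-type monotonicity argument, and the only ingredients it requires are the convexity of the norm (each iterate in $(\ref{eq.2})$ is a convex combination), the nonexpansiveness of $K$, and the fact that $Kw = w$ for every $w \in F(K)$.

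First I would control the intermediate iterate $r_n$. Writing $r_n - w = (1-\beta_n)(s_n - w) + \beta_n(Ks_n - Kw)$ and applying the triangle inequality together with $\|Ks_n - Kw\| \le \|s_n - w\|$ gives
\begin{equation*}
\|r_n - w\| \le (1-\beta_n)\|s_n - w\| + \beta_n\|s_n - w\| = \|s_n - w\|.
\end{equation*}

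Next, I would carry the same computation through to $s_{n+1}$. Since $s_{n+1} - w = (1-\alpha_n)(r_n - w) + \alpha_n(Ks_n - Kw)$, the triangle inequality, nonexpansiveness, and the bound just obtained for $\|r_n - w\|$ yield
\begin{equation*}
\|s_{n+1} - w\| \le (1-\alpha_n)\|r_n - w\| + \alpha_n\|s_n - w\| \le (1-\alpha_n)\|s_n - w\| + \alpha_n\|s_n - w\| = \|s_n - w\|.
\end{equation*}
Hence $\|s_{n+1} - w\| \le \|s_n - w\|$ for every $n$, so the sequence is non-increasing; being bounded below by $0$, it converges, which is the assertion.

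I do not anticipate a genuine obstacle here: the argument is a direct chain of convexity and nonexpansiveness estimates, and it does not even invoke the full strength of the hypothesis $\alpha_n,\beta_n \in (a,b)$ — mere membership in $[0,1]$ suffices, together with $F(K)\neq\emptyset$ to supply the reference point $w$. The only point demanding care is the bookkeeping of the convex combinations, making sure each iterate is expressed as a combination whose coefficients sum to one, so that the triangle inequality can be applied term by term.
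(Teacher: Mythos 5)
Your argument is correct and coincides with the paper's own proof: both establish $\|r_n - w\| \le \|s_n - w\|$ via convexity and nonexpansiveness, deduce $\|s_{n+1} - w\| \le \|s_n - w\|$, and conclude by monotone convergence of a bounded real sequence. Your side remark that membership of $\alpha_n, \beta_n$ in $[0,1]$ already suffices for this lemma is also accurate.
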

		\begin{proof} Taking $w\in F(K)$ and $v\in W$. Since, $K$ is a nonexpansive mapping we have,
			$\|Kv-Kw\| \leq \|v-w\|$.
			\\ Now,\\
			\begin{align}\label{eq.13}
				\|t_n-w\|&=\|\{(1-\beta_n)s_n+\beta_nKs_n\}-w\|\nonumber\\
				& \leq(1-\beta_n)\|s_n-w\|+\beta_n\|Ks_n-w\|\nonumber\\
				& \leq(1-\beta_n)\|s_n-w\|+\beta_n\|s_n-w\|\nonumber\\
				&= \|s_n-w\|%\nonumber\\
			\end{align}
			By using \eqref{eq.13}, we have
			\begin{align}\label{eq.14}
				\|s_{n+1}-w\|&=\|\{(1-\alpha_n)t_n+\alpha_nKs_n\}-w\|\nonumber\\
				& \leq (1-\alpha_n)\|t_n-w\|+\alpha_n\|Ks_n-w\|\nonumber\\
			    & \leq (1-\alpha_n)\|t_n-w\|+\alpha_n\|s_n-w\|\nonumber\\
			    & \leq (1-\alpha_n)\|s_n-w\|+\alpha_n\|Ks_n-w\|\nonumber\\
				&= \|s_n-w\|.\nonumber\\
			\end{align}
				This implies that ${\|s_n-w\|}$ is a decreasing sequence which is bounded below for all $w \in F(K)$.
			Hence, $\lim_{n\to\infty}\|s_n-w\|$ exists.
		    \end{proof}
			\begin{lem}\label{lem.7}\ Let there be a nonempty subset $W$ of a uniformly convex Banach space $Z$ which is closed and convex  and $K:W \rightarrow W$ is a self nonexpansive mapping having nonempty $F(K)$. Let $\{s_n\}$ be a sequence with $\{s_1\}\in W$ defined as in $(\ref{eq.2})$ with $\{\alpha_n\}, \{\beta_n\} \in (a, b) \subset (0, 1)$ for all $n \in \mathbf{N}$, where $a, b \in (0, 1)$.\\. Then $F(K)\neq\emptyset$ if and only if $\{s_n\}$ is bounded and $\lim_{n\to\infty}\|s_n-Ks_n\|=0$.
			\end{lem}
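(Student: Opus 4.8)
The plan is to prove the two implications separately. Start with the ``only if'' direction: assume $F(K)\neq\emptyset$ and fix $w\in F(K)$. By Lemma~\ref{lem.6} the limit $c:=\lim_{n\to\infty}\|s_n-w\|$ exists, so in particular $\{s_n\}$ is bounded, and it remains to show $\lim_{n\to\infty}\|s_n-Ks_n\|=0$. The idea is to read the last line of \eqref{eq.2} as the convex combination
\[
s_{n+1}-w=\alpha_n(Ks_n-w)+(1-\alpha_n)(r_n-w),
\]
and to apply Proposition~\ref{def.5} (Schu's lemma) with $q_n=\alpha_n\in(a,b)$. To set this up I would first record that nonexpansiveness gives $\limsup_{n\to\infty}\|Ks_n-w\|\le c$ and, via the estimate \eqref{eq.13}, $\limsup_{n\to\infty}\|r_n-w\|\le c$, while $\lim_{n\to\infty}\|s_{n+1}-w\|=c$ since it is a shift of the convergent sequence $\{\|s_n-w\|\}$. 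Proposition~\ref{def.5} (with common bound $g=c$) then yields $\lim_{n\to\infty}\|Ks_n-r_n\|=0$. To convert this into the desired conclusion, note that $Ks_n-r_n=(1-\beta_n)(Ks_n-s_n)$ and $1-\beta_n\ge 1-b>0$, so $\|Ks_n-s_n\|\le(1-b)^{-1}\|Ks_n-r_n\|\to 0$. (If $c=0$ the conclusion is immediate, since then $s_n\to w$ and $Ks_n\to w$.)

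For the converse, assume $\{s_n\}$ is bounded with $\lim_{n\to\infty}\|s_n-Ks_n\|=0$, and argue via the asymptotic center. Because $Z$ is uniformly convex and $\{s_n\}$ is a bounded sequence in the closed convex set $W$, the asymptotic center $A(W,\{s_n\})$ consists of a single point $z\in W$ (Definition~\ref{def.4}). I claim $z$ is fixed. Using nonexpansiveness together with the approximate-fixed-point property,
\[
\|s_n-Kz\|\le\|s_n-Ks_n\|+\|Ks_n-Kz\|\le\|s_n-Ks_n\|+\|s_n-z\|,
\]
so taking $\limsup$ gives $r(Kz,\{s_n\})\le r(z,\{s_n\})=r(W,\{s_n\})$. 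Hence $Kz$ also realizes the asymptotic radius, i.e. $Kz\in A(W,\{s_n\})$, and by uniqueness of the asymptotic center $Kz=z$. Therefore $F(K)\neq\emptyset$.

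The step I expect to be the main obstacle is the forward direction: packaging the iteration as a convex combination to which Schu's lemma applies, and in particular checking that the combined norm converges to exactly the common bound $c$ (rather than merely being bounded by it), and then using the uniform lower bound $1-\beta_n\ge 1-b$ to pass from $\|Ks_n-r_n\|\to 0$ back to $\|Ks_n-s_n\|\to 0$. The converse is essentially routine once the uniqueness of the asymptotic center in a uniformly convex space is invoked; one could alternatively phrase it through the demiclosedness of $I-K$ (Lemma~\ref{def.5a}), but the asymptotic center argument is more direct here since no weak limit is assumed.
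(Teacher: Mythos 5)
Your proposal is correct, and in two respects it improves on the paper's own proof. In the forward direction you and the paper start identically: both apply Schu's result (Proposition~\ref{def.5}) to the decomposition $s_{n+1}-w=\alpha_n(Ks_n-w)+(1-\alpha_n)(r_n-w)$, using $\limsup_{n\to\infty}\|Ks_n-w\|\le c$, $\limsup_{n\to\infty}\|r_n-w\|\le c$ and $\lim_{n\to\infty}\|s_{n+1}-w\|=c$ to conclude $\lim_{n\to\infty}\|Ks_n-r_n\|=0$. But from there the routes diverge: the paper takes a second pass, first showing $\lim_{n\to\infty}\|r_n-w\|=c$ and then invoking Xu's inequality (Theorem~\ref{thm.a}) with $p=2$ to extract $\beta_n(1-\beta_n)\|s_n-Ks_n\|^2\le\|s_n-w\|^2-\|r_n-w\|^2$, whereas you simply observe the exact algebraic identity $Ks_n-r_n=(1-\beta_n)(Ks_n-s_n)$, so that $\|Ks_n-s_n\|\le(1-b)^{-1}\|Ks_n-r_n\|\to 0$ since $\beta_n<b<1$. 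Your one-line finish is genuinely simpler, needs no uniform-convexity machinery beyond the single Schu application, and incidentally sidesteps some sign/typo issues in the paper's intermediate displays. Second, the paper's proof only establishes the ``only if'' direction and never addresses the converse, even though the lemma is stated as an equivalence; your asymptotic center argument (uniqueness of $A(W,\{s_n\})$ in a uniformly convex space, plus $r(Kz,\{s_n\})\le r(z,\{s_n\})$ forcing $Kz=z$) is the standard way to close that gap and is correct as written, so your proof is in fact more complete than the paper's. Your alternative remark about demiclosedness would also work, since uniform convexity gives reflexivity and hence a weakly convergent subsequence, but the asymptotic center route avoids invoking weak limits, as you say.
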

			\begin{proof} Consider $F(K)\neq\emptyset$ and take $w \in F(K)$. Then, lemma (\ref{lem.6}) provides the existence of $\lim_{n\to\infty}\|s_n-w\|$ and boundedness of $\{s_n\}$. Let
				$$\lim_{n\to\infty}\|s_n-w\|=r.$$
			From \eqref{eq.13}, we have
			$$\limsup_{n\to\infty}\|t_n-w\| \leq \limsup_{n\to\infty}\|s_n-w\|=r.$$
			Since, $K$ is nonexpansive mapping, we have\\
            $$\|Ks_n-w\| \leq \|s_n-w\|=r \hspace{0.5cm}and \hspace{0.5cm}\|Kt_n-w\| \leq \|t_n-w\|=r.$$
            Taking limsup on both sides, we obtain
            $$\limsup_{n\to\infty}\|Ks_n-w\| \leq r$$
            $$\limsup_{n\to\infty}\|Kt_n-w\| \leq r$$
            Also,
            \begin{align}\label{eq.15}
			 r&=\lim_{n\to\infty}\|s_{n+1}-w\|\nonumber\\
               &=\lim_{n\to\infty}\|\{(1-\alpha_n)t_n+\alpha_nKs_n\}-w\|\nonumber\\
              &=\lim_{n\to\infty}\|(1-\alpha_n)(t_n-w)+\alpha_n(Ks_n-w)\|\nonumber\\
			\end{align}
            Using proposition (\ref{def.5}), we have
            $$\lim_{n\to\infty}\|t_n-Ks_n\|=0.$$
            Now,
            \begin{align}\label{eq.15}
			 \|s_{n+1}-w\|
               &=\|\{(1-\alpha_n)t_n+\alpha_nKs_n\}-w\|\nonumber\\
              &=\|t_n-\alpha_nt_n +\alpha_nKs_n-w\|\nonumber\\
              &=\|(t_n-w)-\alpha_n(t_n +Ks_n)\|\nonumber\\
              &\leq \|t_n-w\| + \alpha_n\|t_n-Ks_n\|
			\end{align}
            $$ \implies r\leq \liminf_{n\to\infty}\|t_n-w\|$$ 
            we get
             $$\lim_{n\to\infty}\|t_n-w\|= r.$$
             Now, using Theorem (\ref{thm.a}), we have
              \begin{align}\label{eq.16}
			 \|t_n-w\|^2
               &=\|\{(1-\beta_n)s_n+\beta_nKs_n\}-w\|^2\nonumber\\
              &\leq(1-\beta_n)\|s_n-w\|^2 + \beta_n\|Ks_n-w\|^2 - \beta_n(1-\beta_n)\|s_n-Ks_n\|^2\nonumber\\
              &\leq\|s_n-w\|^2-\beta_n(1-\beta_n)\|s_n-w\|^2\nonumber\\
			\end{align}
            so that,
            $$\beta_n(1-\beta_n)\|s_n-Ks_n\|^2 \leq \|s_n-w\|^2 - \|t_n-w\|^2.$$
            Also, we have
            \begin{align}\label{eq.17}
             \|s_n-Ks_n\|^2 &\leq \frac{1}{\beta_n(1-\beta_n)} [\|s_n-w\|^2 - \|t_n-w\|^2] \nonumber\\
             &\leq \frac{1}{a(1-b)} [\|s_n-w\|^2 - \|t_n-w\|^2]. \nonumber\\
            \end{align}
            By taking $\lim_{n\to\infty}$ on both sides, we get 
            $$\|s_n-Ks_n\|=0.$$
            \end{proof}
		\begin{thm}\label{thm.1}
			Let $K$, $W$ and $Z$ be same as defined in Lemma  \ref{lem.7} such that the mapping  $K$ satisfies the Opial's condition with $F(K)\neq\emptyset$. If $\{s_n\}$ is the iterative sequence defined by iterative process $(\ref{eq.2})$.Then, $\{s_n\}$  converges  weakly  to  a  fixed  point  of $K$.
		\end{thm}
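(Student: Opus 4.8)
The plan is to combine the three ingredients already assembled in the excerpt: the asymptotic regularity and boundedness from Lemma \ref{lem.7}, the demiclosedness of $I-K$ from Lemma \ref{def.5a}, and the existence of $\lim_{n\to\infty}\|s_n-w\|$ from Lemma \ref{lem.6}, with the uniqueness of weak subsequential limits forced by Opial's condition. Since $Z$ is uniformly convex it is reflexive, so the bounded sequence $\{s_n\}$ (boundedness guaranteed by Lemma \ref{lem.7}) admits weakly convergent subsequences. I would begin by fixing two such subsequences $\{s_{n_i}\}$ and $\{s_{m_j}\}$ with $s_{n_i}\rightharpoonup p$ and $s_{m_j}\rightharpoonup q$, where $p,q\in W$ because $W$ is weakly closed (being closed and convex).

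Next I would identify these subsequential limits as fixed points. By Lemma \ref{lem.7} we have $\lim_{n\to\infty}\|s_n-Ks_n\|=0$, so along either subsequence $(I-K)s_{n_i}\to 0$. Applying the demiclosedness of $I-K$ at $0$ (Lemma \ref{def.5a}) to the subsequence $s_{n_i}\rightharpoonup p$ with $(I-K)s_{n_i}\to 0$ yields $(I-K)p=0$, that is $p\in F(K)$; the same argument gives $q\in F(K)$.

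The core of the argument is then the Opial uniqueness step. Because $p,q\in F(K)$, Lemma \ref{lem.6} guarantees that both $\lim_{n\to\infty}\|s_n-p\|$ and $\lim_{n\to\infty}\|s_n-q\|$ exist; in particular every subsequence shares these full-sequence limits. Suppose for contradiction that $p\neq q$. Applying Opial's condition (Definition \ref{def.4}) to $s_{n_i}\rightharpoonup p$ gives
$$\lim_{n\to\infty}\|s_n-p\|=\limsup_{i\to\infty}\|s_{n_i}-p\|<\limsup_{i\to\infty}\|s_{n_i}-q\|=\lim_{n\to\infty}\|s_n-q\|,$$
while applying it to $s_{m_j}\rightharpoonup q$ gives the reversed strict inequality
$$\lim_{n\to\infty}\|s_n-q\|=\limsup_{j\to\infty}\|s_{m_j}-q\|<\limsup_{j\to\infty}\|s_{m_j}-p\|=\lim_{n\to\infty}\|s_n-p\|.$$
These two inequalities are incompatible, forcing $p=q$.

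Finally I would conclude that every weakly convergent subsequence of $\{s_n\}$ has the same limit $p\in F(K)$; since $\{s_n\}$ is bounded in a reflexive space, this common subsequential limit is the weak limit of the entire sequence, so $s_n\rightharpoonup p\in F(K)$. The main obstacle is purely organizational rather than technical: one must be careful to invoke Lemma \ref{lem.6} \emph{after} establishing $p,q\in F(K)$ so that the full-sequence limits $\lim_{n\to\infty}\|s_n-p\|$ and $\lim_{n\to\infty}\|s_n-q\|$ are available, since it is exactly the equality of subsequential limits with these full limits that allows the two Opial inequalities to collide and produce the contradiction.
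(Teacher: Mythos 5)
Your proposal is correct and follows essentially the same route as the paper's own proof: asymptotic regularity from Lemma \ref{lem.7} plus demiclosedness of $I-K$ at zero (Lemma \ref{def.5a}) identifies every weak subsequential limit as a fixed point, and then Opial's condition combined with the existence of $\lim_{n\to\infty}\|s_n-w\|$ from Lemma \ref{lem.6} yields the two incompatible strict inequalities that force uniqueness of the weak limit. Your explicit appeals to reflexivity of the uniformly convex space and weak closedness of $W$ merely spell out steps the paper leaves implicit, so there is nothing substantively different to compare.
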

		\begin{proof}
			Let $w \in F(K)$. Then, from Lemma \ref{lem.6} $\lim_{n\to\infty}\|s_n-w\|$ exists. In order to show the weak convergence of the iteration (\ref{eq.2}) to a unique fixed point of $K$, we will verify that the sequence $\{s_n\}$ contains a unique weak subsequential limit in $F(K)$. For this, consider two subsequences $\{s_{n_j}\}$ and $\{s_{n_k}\}$ of $\{s_n\}$ which weakly converges to $u$ and $v$ respectively. From Lemma \ref{lem.7}, we have $\lim_{n\to\infty}\|Ks_n-s_n\|=0$ and from Lemma \ref{def.5a}, it is evident that $(I-K)$ is demiclosed at zero which results in $u,v \in F(K)$.\\
			Now, we proceed to verify the uniqueness. As $u,v \in F(K)$, it means $\lim_{n\to\infty}\|s_n-u\|$ and $\lim_{n\to\infty}\|s_n-v\|$ exists. Let us assume $u \neq v$.\\
			Then, applying Opial's condition, we obtain
			\begin{align}
				\lim_{n\to\infty}\|s_n-u\|=\lim_{j\to\infty}\|s_{n_j}-u\|\nonumber\\
				<\lim_{j\to\infty}\|s_{n_j}-v\|\nonumber\\
				=\lim_{n\to\infty}\|s_n-v\|\nonumber\\
				=\lim_{k\to\infty}\|s_{n_k}-v\|\nonumber\\
				<\lim_{k\to\infty}\|s_{n_k}-u\|\nonumber\\	
				=\lim_{n\to\infty}\|s_n-u\|.\nonumber
			\end{align}
			Which contradicts our assumption, so $u=v$. Thus, the weak convergence of the sequence $\{s_n\}$ to a fixed point of $K$ is established.
		\end{proof}
		\begin{thm}\label{thm.2}
			Let $K$ be a nonexpansive mapping defined on a nonempty closed convex subset $W$ of a uniformly convex Banach space $Z$ with $F(K)\neq\emptyset$. If $\{s_n\}$ is the iterative sequence defined by iterative process $(\ref{eq.2})$,Then $\{s_n\}$  converges strongly to a point of $F(K)$ if and only if $\liminf_{n\to\infty}d(s_n,F(K))=0$.
		\end{thm}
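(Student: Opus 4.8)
The plan is to prove the two implications separately, the forward direction being immediate and the reverse requiring a Cauchy-sequence argument anchored in Lemma~\ref{lem.6}.

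\emph{Necessity.} Suppose $\{s_n\}$ converges strongly to some $p\in F(K)$. Since $d(s_n,F(K))\le\|s_n-p\|\to 0$, we immediately obtain $\liminf_{n\to\infty}d(s_n,F(K))=0$ (indeed the full limit vanishes), so nothing further is needed here.

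\emph{Sufficiency.} The key preliminary observation is that the distance to the fixed point set is itself monotone. From the estimate $\|s_{n+1}-w\|\le\|s_n-w\|$ established for every $w\in F(K)$ in the proof of Lemma~\ref{lem.6}, taking the infimum over $w\in F(K)$ yields $d(s_{n+1},F(K))\le d(s_n,F(K))$. Hence $\{d(s_n,F(K))\}$ is nonincreasing and bounded below, so $\lim_{n\to\infty}d(s_n,F(K))$ exists; together with the hypothesis $\liminf_{n\to\infty}d(s_n,F(K))=0$ this forces
$$\lim_{n\to\infty}d(s_n,F(K))=0.$$
I would then show $\{s_n\}$ is Cauchy. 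For $m\ge n$ and any $w\in F(K)$, monotonicity gives $\|s_m-w\|\le\|s_n-w\|$, so $\|s_m-s_n\|\le\|s_m-w\|+\|s_n-w\|\le 2\|s_n-w\|$; taking the infimum over $w\in F(K)$ produces $\|s_m-s_n\|\le 2\,d(s_n,F(K))$. Since the right-hand side tends to $0$, $\{s_n\}$ is Cauchy, and as $Z$ is complete and $W$ is closed there exists $p\in W$ with $s_n\to p$.

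It remains to verify that $p\in F(K)$. Because the functional $s\mapsto d(s,F(K))$ is $1$-Lipschitz, we get $d(p,F(K))=\lim_{n\to\infty}d(s_n,F(K))=0$, and since $K$ is nonexpansive (hence continuous) its fixed point set $F(K)$ is closed; therefore $d(p,F(K))=0$ gives $p\in F(K)$, establishing strong convergence. The only genuinely substantive point is the monotonicity of $n\mapsto d(s_n,F(K))$, which upgrades the hypothesis $\liminf=0$ to $\lim=0$ and drives the Cauchy estimate; the remaining steps—completeness of $Z$, closedness of $F(K)$, and Lipschitz continuity of the distance functional—are routine.
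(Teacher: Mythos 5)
Your proof is correct, and it coincides with the paper's argument up to the point where the Fej\'er-type monotonicity $\|s_{n+1}-w\|\le\|s_n-w\|$ from Lemma~\ref{lem.6} upgrades the hypothesis to $\lim_{n\to\infty}d(s_n,F(K))=0$; after that the routes genuinely diverge. The paper extracts a subsequence $\{s_{n_j}\}$ and points $w_j\in F(K)$ with $\|s_{n_j}-w_j\|\le 2^{-j}$, shows $\{w_j\}$ is Cauchy in the closed set $F(K)$, gets $s_{n_j}\to w\in F(K)$, and then invokes the existence of $\lim_{n\to\infty}\|s_n-w\|$ from Lemma~\ref{lem.6} to promote subsequential convergence to convergence of the whole sequence. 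You instead prove $\{s_n\}$ is itself Cauchy via the estimate $\|s_m-s_n\|\le 2\,d(s_n,F(K))$ for $m\ge n$, identify the limit $p$ by completeness of $Z$ and closedness of $W$, and place $p$ in $F(K)$ using the $1$-Lipschitz continuity of $s\mapsto d(s,F(K))$ and the closedness of $F(K)$, which you justify from continuity of $K$ (the paper merely asserts it). Your route buys real economy: it avoids the subsequence bookkeeping, where the paper has index slips ($\|s_{j+1}-w_j\|$ should read $\|s_{n_{j+1}}-w_j\|$) and tacitly relies on summability of $2^{-j}$ rather than mere decay of consecutive differences to get Cauchyness; it also dispenses with the final appeal to Lemma~\ref{lem.6} for full-sequence convergence, and it makes visible that the theorem uses only completeness and closedness, not uniform convexity. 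The paper's version is the traditional presentation in this literature; yours is shorter and logically tighter.
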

		\begin{proof}
			If the sequence $\{s_n\}$ converges to a point $w\in F(K)$, then it is obvious that $\liminf_{n\to\infty}d(s_n,F(K))=0$. 
			For the converse part, assume that $\liminf_{n\to\infty}d(s_n,F(K))=0$. From Lemma \ref{lem.6}, we have $\lim_{n\to\infty}\|s_n-w\|$ exists for all $w\in F(K)$, which gives
			\begin{align}
				\|s_{n+1}-w\|\leq \|s_n-w\|  \hspace{0.5cm}   for\hspace{0.1cm} any \hspace{0.1cm} w \in F(K) \nonumber
			\end{align}
				which yields,
			\begin{align}
				d(s_{n+1},F(K))\leq d(s_n,F(K))   \nonumber 
			\end{align}
			Thus, $\{d(s_n,F(K))\}$ is a non-decreasing sequence with zero as its lower bounded as well, this establishes that $\lim_{n\to\infty}d(s_n,F(K))$ exists. As, $\liminf_{n\to\infty}d(s_n,F(K))=0$ so $\lim_{n\to\infty}d(s_n,F(K))=0$.
			Now, we find a subsequence $\{s_{n_j}\}$ of $\{s_n\}$ and a sequence $\{w_j\}$ in $F(K)$ such that $\|s_{n_j}-w_j\|\leq\frac{1}{2^j}$ for all $j \in \NaturalNumber$. From the proof of Lemma \ref{lem.6}, we have  
			\begin{align}
				\|s_{n_{j+1}}-w_j\|\leq \|s_{n_j}-w_j\|\leq\frac{1}{2^j}   \nonumber 
			\end{align}
				Using triangle inequality, we get
			\begin{align*}
				\|w_{j+1}-w_j\|&\leq\|w_{j+1}-s_{j+1}\|+\|s_{j+1}-w_j\|\nonumber\\
				&<\frac{1}{2^{j+1}} + \frac{1}{2^j}\nonumber\\
				&\leq\frac{1}{2^{j-1}}\nonumber\\
				&\to 0 \hspace{0.5cm} as\hspace{0.2cm} j \to \infty
			\end{align*}
			This shows that $\{w_j\}$ is a cauchy sequence in $F(K)$. It is given that $F(K)$ is closed implies that $\{w_j\}$ converges to some $w \in F(K)$.
			Again, using triangle inequality, we have
			\begin{align}
				\|s_{n_j}-w\|\leq \|s_{n_j}-w_j\|+\|w_j-w\|   \nonumber 
			\end{align}
			As $j \to \infty$, we have $\{s_{n_j}\}$ converges to $w \in F(K)$ strongly. As from Lemma 3.1, $\lim_{n\to\infty}\|s_n-w\|$ exists for all $w \in F(K)$ implies that $\{s_n\}$ converges strongly to $w \in F(K)$.
		\end{proof}
		Now, we prove the following strong convergence theorem for the mapping satisfying Condition $(I)$.
		\begin{thm}\label{thm.3}
			Let $K$ be a nonexpansive mapping defined on a nonempty closed convex subset $W$ of a uniformly convex Banach space $Z$ such that $F(K)\neq\emptyset$ and $\{s_n\}$ be the sequence defined by $(\ref{eq.2})$. If $K$ satifies Condition $(I)$, then $\{s_n\}$ converges strongly to a fixed point of $K$.
		\end{thm}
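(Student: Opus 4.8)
The plan is to reduce everything to Theorem \ref{thm.2}, whose sufficient condition for strong convergence is precisely $\liminf_{n\to\infty} d(s_n, F(K)) = 0$. Since $F(K)\neq\emptyset$ by hypothesis, Lemma \ref{lem.7} immediately supplies the two facts I need: the sequence $\{s_n\}$ is bounded, and $\lim_{n\to\infty}\|s_n - Ks_n\| = 0$. The only remaining work is to use Condition $(I)$ to convert this vanishing of the residual $\|s_n - Ks_n\|$ into the vanishing of the distance $d(s_n, F(K))$ to the fixed-point set.

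First I would invoke Condition $(I)$ from Definition \ref{def.6}: there exists a nondecreasing function $m:[0,\infty)\to[0,\infty)$ with $m(0)=0$ and $m(t)>0$ for all $t\in(0,\infty)$ such that
\[
m\big(d(s_n, F(K))\big) \leq \|s_n - Ks_n\| \qquad \text{for all } n\in\NaturalNumber.
\]
Passing to the limit and using $\lim_{n\to\infty}\|s_n - Ks_n\| = 0$ from Lemma \ref{lem.7} then gives $\lim_{n\to\infty} m\big(d(s_n,F(K))\big) = 0$.

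The key step, and the only place where any care is needed, is deducing $\lim_{n\to\infty} d(s_n, F(K)) = 0$ from $\lim_{n\to\infty} m\big(d(s_n,F(K))\big)=0$; this is where the structural properties of $m$ enter. I would argue by contradiction: if $d(s_n,F(K))\not\to 0$, then there exist $\epsilon>0$ and a subsequence with $d(s_{n_k},F(K))\geq\epsilon$, whence monotonicity of $m$ together with $m(\epsilon)>0$ forces $m\big(d(s_{n_k},F(K))\big)\geq m(\epsilon)>0$, contradicting $m\big(d(s_n,F(K))\big)\to 0$. Hence $\lim_{n\to\infty} d(s_n,F(K))=0$, and in particular $\liminf_{n\to\infty} d(s_n,F(K))=0$.

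With $\liminf_{n\to\infty} d(s_n,F(K))=0$ in hand, Theorem \ref{thm.2} applies directly and yields strong convergence of $\{s_n\}$ to a point of $F(K)$, completing the argument. The main obstacle is genuinely the middle step, since the rest is a chain of direct appeals to the earlier results; the subtle point there is simply that the hypothesis $m(t)>0$ for $t\in(0,\infty)$ guarantees $m(\epsilon)>0$, which is exactly what makes the contradiction work.
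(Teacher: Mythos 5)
Your proof is correct, and it differs from the paper's in a worthwhile way at the final step. The first half coincides with the paper's argument: both of you combine Lemma \ref{lem.7} (which gives $\lim_{n\to\infty}\|s_n-Ks_n\|=0$ since $F(K)\neq\emptyset$) with Condition $(I)$ to conclude $m(d(s_n,F(K)))\to 0$, and then deduce $d(s_n,F(K))\to 0$ from the properties of $m$ --- though your explicit contradiction argument (extract a subsequence with $d(s_{n_k},F(K))\geq\epsilon$ and use $m(d(s_{n_k},F(K)))\geq m(\epsilon)>0$) is actually more careful than the paper's one-line assertion that monotonicity of $m$ suffices, and you avoid the paper's superfluous case split on $r=\lim\|s_n-w\|$ being zero or positive, which plays no role in either argument. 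Where you genuinely diverge is the conclusion: once $\liminf_{n\to\infty}d(s_n,F(K))=0$ is in hand, you simply invoke Theorem \ref{thm.2}, whose hypotheses ($K$ nonexpansive on a nonempty closed convex subset of a uniformly convex space, $F(K)\neq\emptyset$, $\{s_n\}$ generated by $(\ref{eq.2})$) match exactly; the paper instead re-runs, inside the proof of Theorem \ref{thm.3}, the same subsequence-plus-Cauchy-sequence construction in $F(K)$ that it already carried out in proving Theorem \ref{thm.2} (choose $t_k\in F(K)$ with $d(s_{n_k},t_k)<2^{-k}$, show $\{t_k\}$ is Cauchy in the closed set $F(K)$, and use existence of $\lim_{n\to\infty}\|s_n-w\|$ from Lemma \ref{lem.6} to upgrade subsequential to full convergence). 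Your reduction buys brevity and eliminates this duplication, at the cost of making Theorem \ref{thm.3} depend on Theorem \ref{thm.2} rather than being self-contained; logically the two routes are equivalent, since the Cauchy construction is exactly the content of the ``if'' direction of Theorem \ref{thm.2}.
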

		\begin{proof}
			By Lemma 3.1, $\lim_{n\to\infty}\|s_n-w\|$ exists for all $w \in F(K)$ and so $\lim_{n\to\infty}\|s_n-F(K)\|$ exists. Assume that $\lim_{n\to\infty}\|s_n-w\|=r$ for some $r \geq 0$. If $r=0$ then the result follows.\\
			Suppose, $r>0$. From the hypothesis and condition $(I)$,
			\begin{align}
				m(d(s_n,F(K)))\leq \|s_n-Ks_n\|.\nonumber
			\end{align}
			Since, $F(K)\neq \emptyset$, by Lemma 3.2, we have $\lim_{n\to\infty}\|s_n-Ks_n\|=0$, which implies
			\begin{align}
				\liminf_{n\to\infty}m(d(s_n,F(K)))=0.\nonumber
			\end{align}
			Since, $m$ is a nondecreasing function then we have $\lim_{n\to\infty}d(s_n,F(K))=0$. Thus, we have a subsequence $\{s_{n_k}\}$ of $\{s_n\}$ and a subsequence $\{t_n\} \subset F(K)$ such that $$d(s_{n_k},t_{n_k}) < \frac{1}{2^k},$$
			for all $k \in \mathbb{N}$. Hence,
			$$d(t_{k+1},t_k) \leq d(t_{k+1},s_{k+1})+d(s_{k+1},t_k) \leq \frac{1}{2^{k-1}} \to 0,$$
			as $k \to \infty$. This shows that $\{t_k\}$ is a Cauchy sequence in $F(K)$ and so it converges to a point $w$. Since, $F(K)$ is closed and $w \in F(K)$ then $\{s_{n_k}\}$ converges strongly to $w$. Since, $\lim_{n\to\infty}\|s_n-w\|$ exists, we have $\{s_n\} \to w \in F(K)$. This completes the proof.
		\end{proof}
		\section{\textbf{Numerical Example}}
			\subsection*{Examples 4.1} Let $Z=\R$ and $W=[-1, 1] \subset Z$ equipped with the norm $\|x\|=|x|$. Define a self-mapping $K$ on $W$ by\\
			\begin{equation*}
				K(x)=1-x
			\end{equation*}
			Clearly the above mapping is a nonexpansive mapping.
			\\Let $\alpha_n = 0.2$ and $\beta_n = \frac{n}{2n+1} $ for all $n \in \mathbb{N}$ and taking $0.01$ and $-0.5$ as two different initial guesses. We obtained the following tables and graph of comparison of speed of convergence of some prominent iterative schemes.
			
			\begin{table}[H]
\small\addtolength{\tabcolsep}{-0.0000001pt}
\begin{center}
\scalebox{0.8}{
\begin{tabular}{|c|c|c|c|c|c|c|c|c|}
\hline
\multirow{2}{*}{\textbf{Steps}} & \multicolumn{4}{c|}{\textbf{Initial point = 0.01}} & \multicolumn{4}{c|}{\textbf{Initial point = -0.5}} \\
\cline{2-9}
 & Mann & Ishikawa & Noor & New & Mann & Ishikawa & Noor & New \\
\hline
1 & 0.01 & 0.01 & 0.01 & 0.01 & -0.5 & -0.5 & -0.5 & -0.5 \\
2 & 0.206 & 0.14067 & 0.17333 & 0.46733  & -0.1 & -0.23333 & -0.16667 & 0.43333\\
3 & 0.3236 & 0.22691 & 0.28658 & 0.50131 & 0.14 & -0.057333 & 0.064444 & 0.50267 \\
4 & 0.39416 & 0.28933 & 0.3628 & 0.49989 & 0.284 & 0.070057 & 0.22 & 0.49977 \\
5 & 0.4365 & 0.33614 & 0.4128 & 0.50001 & 0.3704 & 0.1656 & 0.32204 & 0.50003 \\
6 & 0.4619 & 0.37189 & 0.44504 & 0.5 & 0.42224 & 0.23856 & 0.38783 & 0.5 \\
7 & 0.47714 & 0.39949 & 0.46557 & 0.5 & 0.45334 & 0.29487 & 0.42974 & 0.5 \\
8 & 0.48628 & 0.42093 & 0.47854 & 0.5 & 0.47201 & 0.33863 & 0.45621 & 0.5 \\
9 & 0.49177 & 0.43767 & 0.48668 & 0.5 & 0.4832 & 0.3728 & 0.47281 & 0.5 \\
10 & 0.49506 & 0.4508 & 0.49175 & 0.5 & 0.48992 & 0.39958 & 0.48317 & 0.5 \\
11 & 0.49704 & 0.4611 & 0.49491 & 0.5 & 0.49395 & 0.42062 & 0.48961 & 0.5 \\
12 & 0.49822 & 0.46922 & 0.49686 & 0.5 & 0.49637 & 0.43719 & 0.4936 & 0.5 \\
13 & 0.49893 & 0.47562 & 0.49807 & 0.5 & 0.49782 & 0.45025 & 0.49607 & 0.5 \\
14 & 0.49936 & 0.48068 & 0.49882 & 0.5 & 0.49869 & 0.46057 & 0.49759 & 0.5 \\
15 & 0.49962 & 0.48468 & 0.49927 & 0.5 & 0.49922 & 0.46873 & 0.49852 & 0.5 \\
16 & 0.49977 & 0.48784 & 0.49956 & 0.5 & 0.49953 & 0.47518 & 0.49909 & 0.5 \\
17 & 0.47714 & 0.39949 & 0.46557 & 0.5 & 0.49972 & 0.4803 & 0.49945 & 0.5 \\
18 & 0.49992 & 0.49233 & 0.49983 & 0.5 & 0.49983 & 0.48435 & 0.49966 & 0.5 \\
19 & 0.49995 & 0.49391 & 0.4999 & 0.5 & 0.4999 & 0.48757 & 0.49979 & 0.5 \\
20 & 0.49997 & 0.49516 & 0.49994 & 0.5 & 0.49994 & 0.49012 & 0.49987 & 0.5 \\
\hline
\end{tabular}}
\end{center}
\caption{Iterative values.}
\end{table}

        \begin{figure}[!htbp]
        	\centering
        	\hspace*{1.0cm}
        	\includegraphics[width=1.19\textwidth]{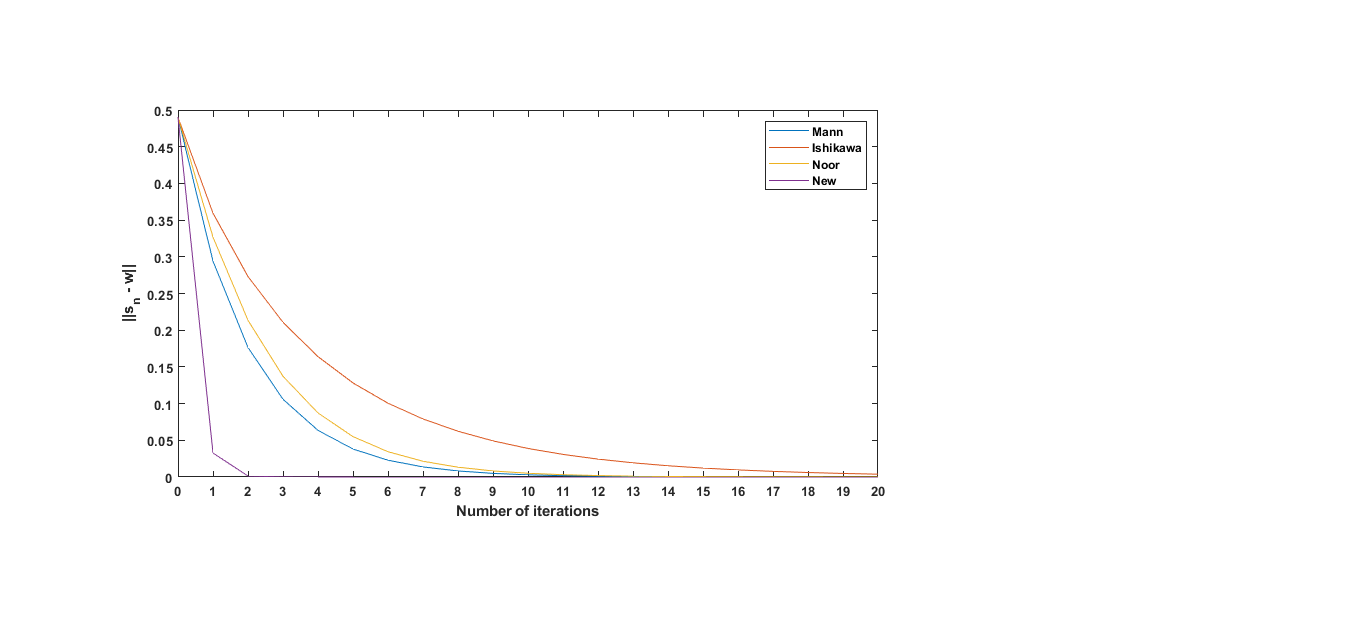} % Adjust width as needed
        	%\captionsetup{justification=raggedcenter, singlelinecheck=false, margin=0.05cm}
        	\vspace{-60pt}
            \centering
        	\caption{}
        	\label{fig:your-label}
        \end{figure}

        \section{Conclusion}
       In this paper, we proposed a new iterative scheme for approximating fixed points of nonlinear operators in a real Banach (or Hilbert) space. The theoretical analysis established both weak and strong convergence of the sequence generated by the proposed method under suitable conditions. Furthermore, a numerical example was presented to compare the performance of the new scheme with several existing iterative methods. The comparative results demonstrated that the proposed iterative process achieves a faster rate of convergence, highlighting its computational efficiency and practical relevance. These results indicate that the new scheme serves as a reliable and effective alternative for solving fixed point problems, and it may be extended in future work to broader classes of operators and applications.\\

        \hspace{-13pt}\textbf{Authors Contributions:}
        Both authors contributed equally.\\
        \textbf{Conflicts of Interest:} The authors declare no conflicts of interest.\\
        \textbf{Data Availability statement:} The Authors have nothing to report.\\
        \textbf{Ethics Statement:} The authors have nothing to report.\\
        \textbf{Funding:}No funding was received to assist with the preparation of this manuscript.

\end{document}